\date{\today}
\theoremstyle{plain}
\newtheorem{thm}{Theorem}
\newtheorem{conj}[thm]{Conjecture}
\newtheorem{corol}[thm]{Corollary}
\newtheorem{theorem}{Theorem}[section]
\newtheorem{lemma}[theorem]{Lemma}
\title{Every graph contains a linearly sized induced subgraph with all degrees odd}
\author{Asaf Ferber\thanks{Department of Mathematics, University of California, Irvine.
Email:asaff@uci.edu.
Research supported in part by NSF grants DMS-1954395 and DMS-1953799.} \and Michael Krivelevich\thanks{School of Mathematical Sciences, Tel Aviv University, Tel Aviv 6997801, Israel. Email: krivelev@tauex.tau.ac.il.
Research supported in part by USA--Israel BSF grant 2018267 and by ISF grant 1261/17.}}
\begin{document}

\maketitle
\begin{abstract}
  We prove that every graph $G$ on $n$ vertices with no isolated vertices contains an induced subgraph of size at least $n/10000$ with all degrees odd. This solves an old and well-known conjecture in graph theory.
\end{abstract}

\section{Introduction}

We start with recalling a classical theorem of Gallai (see \cite{Lovasz}, Problem 5.17 for a proof):
\begin{thm}
[Gallai's Theorem]  \label{gallai} Let $G$ be any graph.
\begin{enumerate}
    \item There exists a partition $V(G)=V_1\cup V_2$ such that both graphs $G[V_1]$ and $G[V_2]$ have all their degrees even.
    \item There exists a partition $V(G)=V_o\cup V_e$ such that the graph $G[V_e]$ has all its degrees even, and the graph $G[V_o]$ has all its degrees odd.
\end{enumerate}
\end{thm}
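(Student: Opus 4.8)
The plan is to translate everything into linear algebra over the field $\mathbb{F}_2=\{0,1\}$. Index the vertices $V(G)=\{1,\dots,n\}$, let $A\in\mathbb{F}_2^{n\times n}$ be the adjacency matrix (symmetric, with zero diagonal since $G$ is simple), let $d_i$ be the degree of vertex $i$, let $\mathbf{d}=(d_1,\dots,d_n)^{\mathsf T}$ be the degree vector read modulo $2$, and let $D\in\mathbb{F}_2^{n\times n}$ be the diagonal matrix with $D_{ii}=d_i$. For a set $S\subseteq V(G)$ with indicator vector $x\in\mathbb{F}_2^n$, the parity of the number of neighbours of $i$ inside $S$ is exactly $(Ax)_i$, and the parity of the number of neighbours of $i$ inside the complement $V(G)\setminus S$ is $d_i+(Ax)_i$. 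For a symmetric matrix $M$ I write $\operatorname{diag}(M)=(M_{11},\dots,M_{nn})^{\mathsf T}\in\mathbb{F}_2^n$ for its vector of diagonal entries. The whole theorem will reduce to solving two linear systems of the special form $Mx=\operatorname{diag}(M)$ for suitable symmetric $M$.

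For part (1) I take $x$ to be the indicator of $V_1$ and set $M_1=A+D$, so that $\operatorname{diag}(M_1)=\mathbf{d}$ (as $A$ has zero diagonal). Reading $M_1x=\mathbf{d}$ coordinatewise gives $(Ax)_i=d_i(1+x_i)$: for $i\in V_1$ (i.e.\ $x_i=1$) this says the number of neighbours of $i$ in $V_1$ is even, while for $i\in V_2$ (i.e.\ $x_i=0$) it gives $(Ax)_i=d_i$, so $d_i+(Ax)_i=0$ and $i$ has an even number of neighbours in $V_2$ as well. Thus a solution $x$ yields the desired partition. For part (2) I instead take $x$ to be the indicator of $V_e$ and set $M_2=A+D+I$, so that $\operatorname{diag}(M_2)=\mathbf{d}+\mathbf{1}$. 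Expanding $M_2x=\mathbf{d}+\mathbf{1}$ gives $(Ax)_i=(d_i+1)(1+x_i)$: for $i\in V_e$ the induced degree $(Ax)_i$ is even, while for $i\in V_o$ (i.e.\ $x_i=0$) the number of neighbours in $V_o$ is $d_i+(Ax)_i=d_i+(d_i+1)=1$, hence odd. So again a solution produces the partition. Both $M_1$ and $M_2$ are symmetric, since $A$ is symmetric and the added matrices are diagonal.

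Everything therefore rests on one clean linear-algebra fact, which I expect to be the crux: \emph{for every symmetric matrix $M$ over $\mathbb{F}_2$, the vector $\operatorname{diag}(M)$ lies in the column space $\operatorname{Im}(M)$}, so that $Mx=\operatorname{diag}(M)$ is always solvable. I would prove this by duality. For any $z\in\ker(M)$ one has $z^{\mathsf T}Mz=z^{\mathsf T}(Mz)=0$; on the other hand, expanding $z^{\mathsf T}Mz=\sum_i M_{ii}z_i^2+\sum_{i<j}(M_{ij}+M_{ji})z_iz_j$ and using symmetry (the off-diagonal terms cancel in pairs modulo $2$) together with $z_i^2=z_i$ shows $z^{\mathsf T}Mz=\langle\operatorname{diag}(M),z\rangle$. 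Hence $\operatorname{diag}(M)$ is orthogonal to $\ker(M)$. Since the standard bilinear form on $\mathbb{F}_2^n$ is non-degenerate and $M$ is symmetric, $\operatorname{Im}(M)^{\perp}=\ker(M^{\mathsf T})=\ker(M)$, and taking perpendiculars gives $(\ker M)^{\perp}=\operatorname{Im}(M)$; being orthogonal to $\ker(M)$ then places $\operatorname{diag}(M)$ in $\operatorname{Im}(M)$, as required.

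The main obstacle is precisely this orthogonality step. The real-matrix intuition that $z^{\mathsf T}Mz$ is a genuine quadratic form is misleading over $\mathbb{F}_2$: the key phenomenon is the collapse of $z^{\mathsf T}Mz$ to the \emph{linear} functional $\langle\operatorname{diag}(M),z\rangle$, which is special to characteristic $2$ and to the pairwise cancellation of cross terms for symmetric $M$. Once that identity is in hand, the two applications above finish both parts of the theorem at once, and the remaining work (verifying the coordinatewise parity conditions, as sketched in the second paragraph) is routine.
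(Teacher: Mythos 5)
Your proof is correct and complete: both reductions to a system $Mx=\operatorname{diag}(M)$ check out coordinatewise, and the key lemma that $\operatorname{diag}(M)\in\operatorname{Im}(M)$ for every symmetric $M$ over $\mathbb{F}_2$ is established correctly via the collapse of $z^{\mathsf T}Mz$ to the linear functional $\langle\operatorname{diag}(M),z\rangle$ on $\ker(M)$, together with $(\ker M)^{\perp}=\operatorname{Im}(M)$ for the non-degenerate standard form. The paper itself gives no proof of Theorem~\ref{gallai}, deferring to \cite{Lovasz} (Problem 5.17), and your argument is exactly the standard $\mathbb{F}_2$ linear-algebra proof given there, so the approaches coincide.
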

 It follows immediately from $1.$ that every graph $G$ has an induced subgraph of size at least $|V(G)|/2$ with all its degrees even. This is easily seen to be tight by taking $G$ to be a path.

It is natural to ask whether we can derive analogous results for induced subgraphs with
all degrees odd. Some caution is required here --- an isolated vertex can never be a part of a subgraph with all degrees odd. Thus we restrict our attention to graphs of positive minimum degree.

Let us introduce a relevant notation: given a graph $G=(V,E)$, we define
$$
f_o(G) = \max\{|V_0|: G[V_0]\mbox{ has all degrees odd.}\},
$$
and set
$$
f_o(n)=\min\{f_o(G) \mid G \textrm{ is a graph on }n \textrm{ vertices with } \delta(G)\geq 1\}.
$$

The following is a very well known conjecture, aptly described by Caro already more than a quarter century ago \cite{Car94} as ``part of the graph theory folklore":
\begin{conj}
\label{conje}
There exists a constant $c>0$ such that for every $n\in \mathbb{N}$ we have $f_o(n)\geq cn$.
\end{conj}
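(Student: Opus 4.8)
Work throughout over $\mathbb{F}_2$, and let $A$ be the adjacency matrix of $G$. For $S\subseteq V$ with indicator vector $x=\mathbf 1_S$, the degree of a vertex $v\in S$ inside $G[S]$ equals $(Ax)_v$ modulo $2$, so $G[S]$ has all degrees odd precisely when $(Ax)_v=1$ for every $v\in\mathrm{supp}(x)$. The even analogue asks instead that $(Ax)_v=0$ on $\mathrm{supp}(x)$, and here the excerpt already hands us a half-sized solution via Gallai's Theorem. The reason the odd version is harder is that the prescribed target (the all-ones vector on $S$) is \emph{coupled} to the unknown support $S$: the condition is not linear in $x$, so it cannot be solved by pure linear algebra. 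My plan is therefore to construct $S$ in two stages. First I would produce, probabilistically, a large set in which a constant fraction of vertices already have odd internal degree; then I would correct the remaining ``bad'' (even-degree) vertices by a controlled family of membership flips, using linear algebra over $\mathbb{F}_2$ to guarantee that the required flips exist and a reserved pool of auxiliary vertices to absorb their side effects.

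Concretely, I would first split $V$ into high-degree vertices $H=\{v:\deg(v)\ge d\}$ and low-degree vertices $L$, for a threshold $d$ to be tuned. The sparse regime can be attacked locally: a maximal matching together with its dominated independent set exposes many vertices with few, controllable neighbors, and low-degree stars and trees already admit linear-sized odd subgraphs directly (as $K_{1,m}$ shows, taking the center and an appropriate number of leaves). For the bulk I would take a uniformly random $R\subseteq V$, including each vertex independently with probability $1/2$. Conditioned on $v\in R$, the number of neighbors of $v$ lying in $R$ is distributed as $\mathrm{Bin}(\deg(v),1/2)$, which for $\deg(v)\ge 1$ is odd with probability exactly $1/2$; hence $|R|\approx n/2$, of which about $n/4$ vertices lie in $R$ with odd internal degree (the ``good'' ones) while a further $\approx n/4$ form the bad set $B\subseteq R$ of even internal degree. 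The purpose of the degree threshold is that high-degree vertices have well-spread neighborhoods, which is exactly what the correction step will exploit.

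The crux, and the step I expect to be the main obstacle, is eliminating $B$ without spoiling the good vertices. Flipping the membership of a single vertex $u$ toggles the internal-degree parity of \emph{every} neighbor of $u$, so a naive fix cascades uncontrollably; this is the nonlinearity flagged above made concrete. The plan is to set aside, before randomizing, a reservoir $W=\Theta(n)$ of auxiliary vertices so that the correction can be phrased as solving a single linear system $A[\,\cdot\,,W]\,y=\mathbf 1_B$ over $\mathbb{F}_2$ on the reservoir coordinates, flipping the toggled helpers $\mathrm{supp}(y)\subseteq W$; one then invokes the fact that such a target lies in the relevant column space whenever $W$ is chosen with enough rank or expansion (equivalently, arranging that bad vertices have essentially private helpers in $W$, so each toggle fixes its target and disturbs only a bounded, accountable number of others). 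The delicate points are that the helpers which enter or leave $S$ must themselves end up with odd internal degree, that the number of flips and the size of $W$ together consume only a small constant fraction of $n$, and that after correction no surviving vertex becomes isolated. Balancing the reservoir so that it is simultaneously large enough to realize all required parities yet small enough to preserve a linear lower bound, while controlling the interaction between the random stage and the correction stage through a careful choice of $d$ and $|W|=\Theta(n)$ under the high/low-degree dichotomy, is where the real work lies and is what would yield $f_o(n)\ge cn$.
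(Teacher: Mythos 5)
Your random half-step is sound and indeed mirrors devices the paper itself uses (in Lemmas \ref{lemma2} and \ref{lemma3} a uniformly random subset is taken and each candidate vertex gets odd parity with probability exactly $1/2$), but the entire weight of your argument rests on the correction stage, and there lies a genuine gap. The assertion that $\mathbf{1}_B$ lies in the column space of $A[\,\cdot\,,W]$ ``whenever $W$ is chosen with enough rank or expansion'' is not something one can arrange in a general graph: over $\mathbb{F}_2$ the adjacency matrix can have very low rank (for $K_{n/2,n/2}$ it is $2$), and more importantly a graph can consist largely of near-twins --- many vertices with (almost) identical neighborhoods --- in which case every helper flip toggles entire twin-classes simultaneously and no reservoir can contain ``essentially private helpers.'' You also give no mechanism for the three side conditions you correctly flag (flips disturb good vertices, the helpers entering $S$ must themselves end up odd, and no vertex may become isolated); this uncontrolled cascade is precisely why the problem stood open with only Scott's $\Omega(n/\log n)$ bound, so invoking solvability of the linear system is restating the theorem's difficulty rather than overcoming it.

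It is instructive that the paper does realize your ``private helper'' intuition, but \emph{constructively} rather than by rank considerations: Lemma \ref{lemma3} posits a semi-induced matching $M$ with sides $U,W$ in which each $w\in W$ has exactly one neighbor among the matched vertices, so after the random step and an application of Gallai's theorem to clean up the target set, each matching edge fixes the parity of its $U$-endpoint with zero side effects --- these mates are exactly your private helpers. The hard work, occupying all of Section 3, is growing such a matching edge by edge; and when the growth process stalls, the paper proves the graph must be locally twin-like ($L(G;\beta)$ small) and disposes of that case by an entirely separate clustering argument (Lemma \ref{lemma2n}), partitioning the graph into near-clique clusters $U_i$ and applying the maximum-degree bound of Lemma \ref{lemma1} inside each. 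Note that this stalled case is exactly the twin-heavy regime where your linear system is unsolvable, so the dichotomy is not optional: without a counterpart to it, your sketch cannot be completed along the lines proposed.
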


Caro himself proved \cite{Car94}  that $f_o(n)=\Omega(\sqrt{n})$, resolving a question of Alon who asked whether $f_o(n)$ is polynomial in $n$.  The current best bound, due to Scott \cite{Sco92}, is $f_o(n)=\Omega(n/\log n)$. There have been numerous variants and partial results about the conjecture, we will not cover them here.

Our main result establishes Conjecture \ref{conje} with $c=0.0001$.

\begin{thm}  \label{main}
Every graph $G$ on $n$ vertices with $\delta(G)\geq 1$ satisfies: $f_o(G)\ge cn$ for $c=\frac{1}{10000}$.
\end{thm}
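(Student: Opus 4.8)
The plan is to first dispose of an easy regime using Gallai's Theorem, thereby concentrating all the difficulty into an all-even graph, and then to attack the hard core with a linear-algebraic and probabilistic argument over $\mathbb{F}_2$. First I would apply Theorem~\ref{gallai}(2) to obtain $V(G)=V_o\cup V_e$ with $G[V_o]$ all-odd and $G[V_e]$ all-even. Since $f_o(G)\ge |V_o|$, I may assume $|V_o|<n/10000$, so $G[V_e]$ is an all-even graph on more than $(1-\tfrac{1}{10000})n$ vertices. I then split $V_e=Z\cup Y$, where $Z$ is the set of vertices isolated in $G[V_e]$ and $Y=V_e\setminus Z$. Two structural facts drive everything: (a) $Z$ is independent in $G$ and, because $\delta(G)\ge1$, every $z\in Z$ satisfies $\emptyset\ne N_G(z)\subseteq V_o$; (b) no edges run between $Y$ and $Z$ inside $V_e$, so $G[Y]$ is itself all-even with $\delta(G[Y])\ge 2$.

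The engine of the argument handles the regime where $Z$ is linear in $n$. Working over $\mathbb{F}_2$ with adjacency matrix $A$, for a subset $S_V\subseteq V_o$ with indicator $\chi\in\mathbb{F}_2^{V_o}$ I set $S_Z=\{z\in Z:\ |N_G(z)\cap S_V|\text{ is odd}\}$. Since $Z$ is independent with all neighbours in $V_o$, every $z\in S_Z$ automatically has odd degree in $G[S_V\cup S_Z]$, for free. Choosing $\chi$ uniformly at random and using $N_G(z)\cap V_o\ne\emptyset$, each $z$ lies in $S_Z$ with probability exactly $1/2$, so $\mathbb{E}|S_Z|=|Z|/2$. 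Thus a linear $|Z|$ yields a linear pool of guaranteed-odd vertices, and the only remaining constraints are the at most $|V_o|<n/10000$ parity conditions ``$\deg_{G[S]}(w)$ odd'' for $w\in S_V$. Writing $M=A_{VZ}A_{ZV}+A_{VV}$, a symmetric $V_o\times V_o$ matrix over $\mathbb{F}_2$ possibly with nonzero diagonal, these read $(M\chi)_w=1$ for all $w\in\mathrm{supp}(\chi)$.

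Because only $o(n)$ residual constraints remain, I plan to pay for them cheaply. One route is alteration: start from a random $\chi$ with $\mathbb{E}|S_Z|=|Z|/2$ and concentration, then delete from $S_V$ the bad vertices $w$ with $(M\chi)_w=0$; since $|S_V|\le|V_o|$ this removes few vertices, and one controls the cascade on $S_Z$ by a second-moment or union-bound estimate. A cleaner route is linear-algebraic: the admissible $\chi$ form a structured family cut out by the parity conditions, and using the classical fact that $\mathrm{diag}(M)\in\mathrm{Im}(M)$ for symmetric $M$ over $\mathbb{F}_2$ (since $0=z^{\mathsf T}Mz=\sum_i M_{ii}z_i$ for every $z\in\ker M$, so $\mathrm{diag}(M)\perp\ker M=\ker M^{\mathsf T}$) one exhibits nonzero admissible $\chi$; randomising within this family should retain $\mathbb{E}|S_Z|=\Omega(|Z|)$.

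The crux, which I expect to be the main obstacle, is the complementary regime where $Z$ is small, so the linear-sized part is $Y$ and $G[Y]$ is all-even with minimum degree at least $2$. Here the clean independent ``control set'' is gone, and feeding $G[Y]$ back into Gallai's Theorem only reproduces an all-even graph, so a naive recursion need not make progress. Resolving this all-even, positive-minimum-degree case requires running the $\mathbb{F}_2$ random-vector engine globally on $G[Y]$ — selecting a random seed/target, solving a linear system $Ay=b$ to force odd degrees on a large random set of vertices, and then correcting a lower-order set of violated vertices — while keeping the surviving odd-degree set linear. Making the expectation computation and the correction step coexist (a linear yield against only $o(n)$ repairs) is exactly where the real work lies, and is what ultimately pins down an explicit constant such as $1/10000$.
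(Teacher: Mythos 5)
The decisive gap is that the case you yourself label ``the crux'' is left entirely unproved, and it is not a routine missing detail --- it is the whole conjecture. The reduction through Theorem~\ref{gallai}(2) to an all-even graph is standard and lossless, which is exactly why it creates no traction by itself: if $G$ is, say, connected with all degrees even, then Gallai gives $V_o=\emptyset$ and $Z=\emptyset$, and your argument lands immediately in the unsolved regime $Y=V$. This all-even, $\delta\geq 2$ core is precisely where Caro's $\Omega(\sqrt n)$ and Scott's $\Omega(n/\log n)$ bounds got stuck, so any proof of a linear bound must contain a genuinely new idea there. Your sketch for it --- ``solve $Ay=b$ over $\mathbb{F}_2$ for a random target and correct a lower-order set of violated vertices'' --- names no mechanism keeping the violations at $o(n)$: in an all-even graph the natural parity system imposes one constraint per selected vertex (a linear number, not $o(n)$), and each deletion made to repair one vertex flips the parity of all its selected neighbours, so nothing prevents the correction cascade from consuming the entire set. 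Even your ``easy'' large-$Z$ engine is not sound as stated: the admissible $\chi$ form a coset of $\ker M$, on which each parity $|N_G(z)\cap S_V| \bmod 2$ is an affine function that is either uniform or \emph{constant}; if $M$ is invertible the coset is a single point and $|S_Z|$ can be $0$, so ``randomising within this family should retain $\mathbb{E}|S_Z|=\Omega(|Z|)$'' does not follow. (Also note that $M\chi=\mathrm{diag}(M)$ gives $(M\chi)_w=1$ on $\mathrm{supp}(\chi)$ only for those $w$ with $M_{ww}=1$, i.e.\ with an odd number of $Z$-neighbours, which is not guaranteed.)

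For contrast, the paper's proof shares essentially nothing with this plan beyond using Gallai's theorem inside auxiliary lemmas. It never decomposes $G$ globally into odd and even parts; instead it grows, edge by edge, a matching with sides $U,W$ in which every $w\in W$ has a unique neighbour among the matched vertices, maintaining a linearly growing set $X_i$ of fresh neighbours of $U$; Lemma~\ref{lemma3} converts such expansion into an odd induced subgraph of size $|X_i|/4$ via a random subset of $U$ plus a Gallai cleanup. When the matching cannot be extended profitably, the structural dichotomy of Lemma~\ref{lemma2n} takes over: if few vertices have a neighbour with a substantially different neighbourhood (small $L(G;\beta)$), the graph decomposes into disjoint, mutually non-adjacent clique-like clusters $U_i$, each containing a vertex of degree linear in $|U_i|$, and the max-degree bound of Lemma~\ref{lemma1} finishes; independent sets are absorbed by Lemma~\ref{lemma2}. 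It is this matching-growth/neighbourhood-similarity dichotomy --- absent from your proposal and not replaced by any substitute --- that breaks the $n/\log n$ barrier, so the proposal as written does not constitute a proof.
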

With some effort/more accurate calculations the constant can be improved but probably to a value which is still quite far from the optimal one; we decided not to invest a substantial effort in its optimization and just chose some constants that work.

A relevant parameter was studied by Scott \cite{Sco01}: given a graph $G$ with no isolated vertices, let $t(G)$ be the minimal $k$ for which there exists a vertex cover of $G$ with $k$ sets, each spanning an induced graph with all degrees odd. Letting
$$
t(n)=\min\{t(G) \mid G \textrm{ is a graph on } n \textrm{ vertices with }\delta(G)\geq 1\},
$$
Scott proved (Theorem 4 in \cite{Sco01}) that
$$\Omega(\log n)=t(n)=O(\log^2n).$$

As indicated by Scott already, showing that $f_o(n)$ is linear in $n$ proves the following:

\begin{corol}
$t(n)=\Theta(\log n).$
\end{corol}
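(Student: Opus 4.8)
The lower bound $t(n)=\Omega(\log n)$ is already established by Scott \cite{Sco01}, so the plan is to prove the matching upper bound $t(n)=O(\log n)$; the corollary then follows at once by combining the two. I would run a greedy covering process: maintain the set $U\subseteq V(G)$ of vertices not yet covered by any of the odd sets chosen so far, and at each round add a bounded number of induced subgraphs with all degrees odd that together meet a constant proportion of $U$. If every round covers at least a $c'$-fraction of the current $U$ for some absolute constant $c'>0$, then after $O(\log n)$ rounds $U$ becomes empty and the total number of odd sets used is $O(\log n)$. This is exactly Scott's scheme; the point is that Theorem \ref{main} lets each round cover a \emph{constant} fraction of $U$, rather than the $1/\log n$ fraction that $f_o(n)=\Omega(n/\log n)$ yields and that is responsible for his weaker $O(\log^2 n)$ bound.

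Thus everything reduces to a single-round statement: for every $U\subseteq V(G)$ there is a family of $O(1)$ odd induced subgraphs covering $\Omega(|U|)$ vertices of $U$. I would split $U=U^\ast\cup I$, where $I$ is the set of vertices isolated in $G[U]$ and $U^\ast=U\setminus I$. The set $U^\ast$ is clean: if $v\in U^\ast$ has a neighbour $w\in U$, then $w$ in turn has the neighbour $v\in U$, so $w\in U^\ast$ as well, and hence $G[U^\ast]$ has minimum degree at least $1$. So when $|U^\ast|\ge |U|/2$ I would simply apply Theorem \ref{main} to $G[U^\ast]$ to obtain a single odd induced subgraph $S\subseteq U^\ast$ with $|S|\ge |U^\ast|/10000\ge |U|/20000$, covering the required constant fraction.

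The hard case, and the main obstacle, is when the isolated part dominates, $|I|>|U|/2$. Here $I$ is an independent set and every $v\in I$ has all its neighbours in $V\setminus U$; to cover a constant fraction of $I$ one is forced to recruit vertices $w\in W:=N(I)\setminus U$ into the odd sets, and the trouble is that including such $w$ brings along the edges of $G[W]$, so that making the $I$-vertices odd and simultaneously making the recruited $W$-vertices odd becomes a genuine parity-balancing problem. In particular a single black-box application of Theorem \ref{main} to $G[I\cup W]$ does \emph{not} suffice, since the large odd subgraph it guarantees may live almost entirely inside $W$ and miss $I$ altogether. This is precisely the delicate point in Scott's reduction \cite{Sco01}, and I would follow his route: assign each $v\in I$ a neighbour in $W$, group $I$ into stars, and use a Gallai-type parity correction (Theorem \ref{gallai}, which lets one peel off even-degree pieces of $G[W]$) to turn these stars into a constant number of genuine odd induced subgraphs of $G$ covering $\Omega(|I|)$ vertices of $I$. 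Feeding the linear bound of Theorem \ref{main} into this reduction upgrades Scott's $O(\log^2 n)$ to $O(\log n)$, which together with his lower bound gives $t(n)=\Theta(\log n)$.
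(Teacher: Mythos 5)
Your skeleton is sound --- iterate a constant-fraction covering step $O(\log n)$ times and combine with Scott's lower bound --- and your first case (non-isolated part dominates, one application of Theorem \ref{main} to $G[U^\ast]$) is correct. But in the case you yourself flag as the main obstacle, $|I|>|U|/2$, your proof has a genuine gap: the single-round claim that a \emph{constant} number of odd induced subgraphs always covers $\Omega(|I|)$ vertices of an independent set $I$ is asserted, not proved, and it is misattributed. Scott's reduction in \cite{Sco01} covers an independent set with $O(\log n)$ odd sets via a separating-family argument; it is not stated as, and does not directly yield, an ``$O(1)$ sets cover a constant fraction'' bound. Your one-line sketch (``group $I$ into stars and apply a Gallai-type parity correction'') does not work as stated, because a leaf $v\in I$ may be adjacent to several recruited centers in $W$, so its degree in the resulting induced subgraph is uncontrolled --- this is exactly the parity-balancing problem you identified, and naming Theorem \ref{gallai} does not by itself resolve it.

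The missing device is the minimal-dominating-set/private-neighbor trick from the proof of Lemma \ref{lemma2}. Take $D\subseteq N(I)\setminus U$ minimal dominating $I$; minimality gives each $w\in D$ a private neighbor $u_w\in I$ with $N(u_w)\cap D=\{w\}$. If $|D|\le |I|/2$, a uniformly random $D'\subseteq D$ makes each $u\in I\setminus I_D$ have odd degree into $D'$ with probability $\tfrac12$, so one odd set (after fixing the parities of $D'$ via the private neighbors, as in Lemma \ref{lemma2}) covers at least $(|I|-|D|)/2\ge |I|/4$ vertices of $I$ in expectation. If $|D|>|I|/2$, apply part 1 of Theorem \ref{gallai} to $G[D]$ to get $D=D_1\cup D_2$ with both $G[D_j]$ even; then each induced subgraph on $D_j\cup\{u_w: w\in D_j\}$ has all degrees odd (centers get even-plus-one, privates have degree exactly one, and privateness plus independence of $I$ rule out stray edges), so two odd sets cover all of $I_D$, which has size $|D|>|I|/2$. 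With this lemma your argument closes. Alternatively, note that the paper avoids the issue entirely: it does not cover isolated vertices round by round, but defers them, so that after $O(\log n)$ applications of Theorem \ref{main} the uncovered remainder is a single independent set in $G$, which is then covered by $O(\log n)$ odd sets by quoting the relevant lemma inside Scott's proof of Theorem 4 in \cite{Sco01} --- a weaker black box than the one you need, at the cost of a second $O(\log n)$ term that is harmless for the final bound.
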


For completeness, we outline its proof here.

\begin{proof}
Let $G$ be a graph on $n$ vertices with $\delta(G)\geq 1$. By a repeated use of Theorem \ref{main}, we can find disjoint sets $V_1,\ldots,V_t$ such that:
\begin{enumerate}
    \item $V_i\subseteq V(G)\setminus \left(\bigcup_{j=1}^{i-1}V_j\right)$, and
    \item all the degrees in $G[V_i]$ are odd, and
    \item letting $n_i$ be the number of non-isolated vertices in $G\left[ V(G)\setminus \left(\bigcup_{j=1}^{i-1}V_j\right)\right]$, we have that $|V_i|\geq n_i/10000$.
\end{enumerate}

We continue the above process as long as $n_i>0$. Clearly, the process terminates after $t=O(\log n)$ steps. Moreover, letting $U=V(G)\setminus \left(\bigcup_{i=1}^tV_i\right)$, we have that $U$ is an independent set in $G$. Finally, as shown in the proof of Theorem 4 in \cite{Sco01}, every independent set in such $G$ can be covered by $O(\log n)$ odd graphs. This proves that $t(n)=O(\log n)$.

To show a lower bound, we can use the following example due to Scott \cite{Sco01}: assume $n$ is of the form $n=s+\binom{s}{2}$. Let the vertex set of $G$ be composed of two disjoint sets: $A$ of size $s$ associated with $[s]$, and $B$ of size $\binom{s}{2}$ associated with $\binom{[s]}{2}$. The graph $G$ is bipartite with the edges defined as follows: a pair $(i,j)\in B$ is connected to both $i,j\in A$. Observe that if $U\subset V(G)$ spans a subgraph of $G$ with all degrees odd and containing $(i,j)\in B$, then $U$ contains exactly one of $i,j\in A$. Hence if ${\cal U}=(U_1,\ldots, U_t)$ forms a cover of $V(G)$ with subsets spanning odd subgraphs, then ${\cal U}$ separates the set $A$, and the minimum size of such a separating family is easily shown to be asymptotic to $\log_2s=\Omega(\log_2 n)$.
\end{proof}

\section{Auxiliary results}

The following lemma appears as Theorem 2.1 in \cite{Car94}. For the convenience of the reader we provide its simple proof.

\begin{lemma}
\label{lemma1}
For every graph $G$ we have that
$f_o(G)\ge \frac{\Delta(G)}{2}.$
\end{lemma}

\begin{proof}
Let $v\in V(G)$ be a vertex with $d_G(v)=\Delta(G)$, and let $U\subseteq N_G(v)$ be an odd subset of size $|U|\geq \Delta(G)-1$. Apply Gallai's Theorem to $G[U]$ to obtain a partition $U=V_e\cup V_o$, and observe that $V_o$ must be of an even size (so in particular, $|V_e|$ is odd). If $|V_o|\geq \Delta(G)/2$, then we are done. Otherwise, define $V^*=\{v\}\cup V_e$, and observe that $G[V^*]$ has all its degrees odd and is of size at least $\Delta(G)/2$ as required.
\end{proof}

The next lemma appears as Theorem 1 in \cite{Sco92}, and again, for the sake of completeness, we give its proof here.
\begin{lemma}
\label{lemma2}
 For every graph $G$ with $\delta(G)\ge 1$ we have that
 $f_o(G)\ge \frac{\alpha(G)}{2}.$
\end{lemma}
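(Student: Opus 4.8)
The plan is to anchor the construction on a maximum independent set and to build an odd induced subgraph that meets it in many vertices. Let $I$ be a maximum independent set, so $|I|=\alpha(G)$, and put $U=V(G)\setminus I$. For a ``helper set'' $T\subseteq U$ define $J(T)=\{v\in I:|N(v)\cap T|\text{ is odd}\}$. Since $I$ is independent, every vertex $v\in J(T)$ has all of its neighbours in $U$, so in the induced subgraph $G[J(T)\cup T]$ each such $v$ has degree exactly $|N(v)\cap T|$, which is odd by the definition of $J(T)$. Thus, no matter which $T$ we choose, the $I$-side of $G[J(T)\cup T]$ is automatically all-odd, and the only possible defect is the parity of the helper vertices themselves.

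The first, easy step is to make the $I$-side large by averaging. Choose $T\subseteq U$ uniformly at random. For each $v\in I$ we have $N(v)\subseteq U$ and $|N(v)|\ge 1$ (this is the only place the hypothesis $\delta(G)\ge 1$ is used): fixing one neighbour $w$ and conditioning on the membership of all other vertices, toggling $w$ flips the parity of $|N(v)\cap T|$, so $\Pr[v\in J(T)]=\tfrac12$. Hence $\mathbb{E}\,|J(T)|=\alpha(G)/2$, and some $T$ attains $|J(T)|\ge \alpha(G)/2$.

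The main obstacle is the helper side. For such a $T$, a vertex $u\in T$ has degree $|N(u)\cap J(T)|+|N(u)\cap T|$ in $G[J(T)\cup T]$, which need not be odd; worse, the two sides are coupled, since altering $T$ also alters $J(T)$. I would resolve this by not fixing $T$ first but solving for the helpers directly. Let $y\in\mathbb{F}_2^{U}$ be the indicator vector of $T$ and $M$ the $I$--$U$ bipartite adjacency matrix over $\mathbb{F}_2$; then $J(T)$ is exactly the support of $My$, while the requirement ``every $u\in T$ has odd degree'' becomes the condition that $y$ induce an all-odd set for the symmetric matrix $Q=M^{\top}M+A_U$ on $U$, where $A_U$ is the adjacency matrix of $G[U]$. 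The crux is therefore to produce a $y$ that is simultaneously an all-odd set for $Q$ and makes $My$ have weight at least $\alpha(G)/2$, and I expect this to be the heart of the argument. The natural tool is Gallai's Theorem: applying the odd/even splitting to the auxiliary object on $U$ (or directly to the bipartite graph between $I$ and $U$, after first thinning the helpers to an independent set so that no internal $U$-edges interfere) should correct the helper parities while preserving the averaged gain on $I$. Marrying this deterministic Gallai correction to the probabilistic lower bound on $|J(T)|$ is the delicate point, and is where the factor $\tfrac12$ ultimately originates.
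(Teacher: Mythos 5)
Your averaging step is correct and coincides with the paper's: for each $v\in I$, since $\delta(G)\ge 1$ supplies a neighbour in $U$, conditioning on all other coordinates and toggling one fixed neighbour shows $\Pr[v\in J(T)]=\tfrac12$, hence some $T$ attains $|J(T)|\ge \alpha(G)/2$. But your proof stops exactly at what you yourself call ``the heart of the argument'': you never exhibit a mechanism that fixes the parities of the helper vertices, and the mechanism you gesture at does not work. Applying Gallai's theorem to an auxiliary structure on $U$ (your $Q=M^{\top}M+A_U$, or a thinned helper set) replaces $T$ by some subset $T'$, but Gallai gives no control whatsoever over $|J(T')|$, i.e.\ over the weight of $My'$ --- shrinking $T$ rewrites the parities on the $I$-side wholesale, so the lower bound $\alpha(G)/2$ obtained by averaging is destroyed. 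This is precisely the coupling you flagged and then left unresolved: the assertion that some $y$ is simultaneously an all-odd set for $Q$ and makes $My$ large is essentially the lemma itself in disguise, and saying a Gallai splitting ``should'' achieve it is not a proof.

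The paper resolves the coupling by a device that is entirely absent from your sketch: instead of all of $U$, it takes $D\subseteq V(G)\setminus I$ to be a \emph{minimal} set dominating $I$. Minimality buys, for each $w\in D$, a private vertex $u_w\in I$ with $N(u_w)\cap D=\{w\}$. These private vertices $I_D$ are withheld from the averaging (which is run only over $I\setminus I_D$, where $\Pr[u\in I_0]=\tfrac12$ is still valid because $D$ dominates $I$), and are then used as degree-one toggles after the randomness is fixed: for each $w\in D'$ whose degree in $D'\cup I_0$ is even, adding $u_w$ raises $w$'s degree by exactly one and contributes a new vertex of degree one, without disturbing any other parity, since $u_w$ has no other neighbour in $D$ and none in $I$. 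Each helper's parity is thus corrected independently, sidestepping your coupling problem. The bookkeeping also differs from yours in an essential way: reserving $I_D$ costs up to $|D|/2$ in the expectation of $|I_0|$, and this loss is recouped exactly by $\mathbb{E}[|D'|]=|D|/2$, giving $\mathbb{E}[|I_0|+|I_1|+|D'|]\ge \frac{|I|-|D|}{2}+\frac{|D|}{2}=\frac{\alpha(G)}{2}$. Without the minimal-dominating-set/private-neighbour idea (or a genuine substitute for it), your argument has a gap at its central step.
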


\begin{proof}
Let $I\subseteq V(G)$ be a largest independent set in $G$.
Since $\delta(G)\geq 1$, every $u\in I$ has at least one neighbor in $V(G)\setminus I$.

Let $D\subseteq V(G)\setminus I$ be a smallest subset dominating all vertices in $I$. Observe that by the minimality of $D$ for every $w\in D$ there exists some $u_w\in I$ such that $N_G(u_w)\cap D=\{w\}$; let $I_D:=\{u_w \mid w\in D\}$.

Let $D'\subseteq D$ be a subset of $D$ chosen uniformly at random, and let $I_0\subseteq I\setminus I_D$ be a subset consisting of all elements $u\in I\setminus I_D$ that have an odd degree into $D'$.

Let
$$I_1=\{u_w\in I_D \mid w\in D' \textrm{ and } w' \textrm{ has even degree in }D'\cup I_0\},$$
and observe that  $G[I_0\cup I_1 \cup D']$ is an induced subgraph of $G$ with all its degrees odd.

Finally, since $\Pr[u\in I_0]=\frac{1}{2}$, by linearity of expectation we have that
$$\mathbb{E}[|I_0\cup I_1 \cup D'|]=\mathbb{E}[|I_0|]+\mathbb{E}[|I_1|]+\mathbb{E}[|D'|]\geq  \frac{|I|-|D|}{2}+\frac{|D|}{2}=\frac{\alpha(G)}{2}.$$
Hence there exists a set $D'$ for which
$$|I_0|+|I_1|+|D'|\geq \frac{\alpha(G)}{2},$$
as desired.
\end{proof}

Next we argue that if $G$ contains a semi-induced matching with ``nice'' expansion properties, then it also has a large induced subgraph with all degrees odd.
\begin{lemma}
\label{lemma3}
Let $G$ be a graph and let $M$ be a matching in $G$ with parts $U$ and $W$, where every vertex $w\in W$ has only one neighbor in $G$ between the vertices covered by $M$. Assume that $|N_G(U)\setminus (W\cup N_G(W))|\ge k$. Then
$f_o(G)\ge \frac{k}{4}.$
\end{lemma}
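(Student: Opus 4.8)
The plan is to use the matching $M$ to supply ``private degree-one gadgets'' for the vertices of $U$, exactly as the private neighbours $u_w$ were used in the proof of Lemma~\ref{lemma2}, while the large external neighbourhood $S := N_G(U)\setminus(W\cup N_G(W))$, of size $\ge k$, furnishes the bulk of the desired odd subgraph. First I would record the structural features guaranteed by the hypotheses: since each $u\in U$ lies in $N_G(w_u)$ for its partner $w_u\in W$, the set $S$ is disjoint from $U\cup W$; there are no edges between $S$ and $W$ (as $S\cap N_G(W)=\emptyset$); the matched vertex $w_u$ has, among all of $U\cup W\cup S$, the single neighbour $u$; and every $s\in S$ has at least one neighbour in $U$ (as $S\subseteq N_G(U)$).

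Next I would select $U'\subseteq U$ uniformly at random (each vertex included independently with probability $1/2$) and set $S_0=\{s\in S: s \text{ has an odd number of neighbours in } U'\}$. Because every $s\in S$ has at least one $U$-neighbour, the parity of its degree into $U'$ is a uniform bit, so $\Pr[s\in S_0]=1/2$ and hence $\mathbb{E}|S_0|=|S|/2\ge k/2$. Fixing an outcome with $|S_0|\ge k/2$, the aim is to take $S_0$ together with all of $U'$: each $s\in S_0$ should be odd because its only relevant contributions come from $U'$ (odd) and from within $S_0$, while each $u\in U'$ is made odd by optionally adding its gadget $w_u$, which itself then has degree exactly $1$.

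The main obstacle is that $S$ need not be independent, so an $s\in S_0$ may acquire extra neighbours inside $S_0$ that spoil the parity obtained from $U'$. To neutralise these internal edges I would apply Gallai's Theorem (Theorem~\ref{gallai}, part~1) to $G[S_0]$, obtaining a partition $S_0=A\cup B$ with $G[A]$ and $G[B]$ both of all-even degrees, and keep the larger side, say $A$, so that $|A|\ge|S_0|/2\ge k/4$ and every vertex of $A$ has even degree within $A$. For $s\in A$ the final degree is then $(\text{even degree inside } A) + (\text{odd degree into } U') + (0 \text{ edges to } W)$, which is odd, as required.

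Finally I would take the subgraph induced on $A\cup U'\cup W^\ast$, where $W^\ast$ consists of those partners $w_u$ with $u\in U'$ whose inclusion is needed to make $d(u)$ odd: each such $w_u$ is adjacent in the subgraph only to $u$, hence has degree $1$, and toggling it adjusts precisely $u$'s parity without affecting any vertex of $A$ or any other vertex of $U'$. Thus $G[A\cup U'\cup W^\ast]$ has all degrees odd and at least $|A|\ge k/4$ vertices, giving $f_o(G)\ge k/4$. I expect the only genuinely delicate point to be the Gallai step, and in particular verifying that passing from $S_0$ to $A$ does not disturb the parities already secured on the $U'$-side; this holds because deleting the vertices of $S_0\setminus A$ removes only $S$-internal edges and leaves every degree into $U'$ untouched.
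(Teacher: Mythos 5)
Your proof is correct and follows essentially the same route as the paper's: pick a uniformly random subset $U'\subseteq U$ so that half of the external neighbourhood $S$ has odd degree into it, apply Gallai's theorem inside that set to keep a half $A$ with all internal degrees even, and use the matching mates in $W$ as degree-one gadgets to repair the parities of the vertices of $U'$. If anything, your formulation of the repair step (include $w_u$ exactly when needed to make $d(u)$ odd in the induced subgraph) is slightly more careful than the paper's criterion ``$d_G(u,X_1)$ even,'' since it also accounts for possible edges inside $U'$.
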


\begin{proof}
Let $X=N_G(U)\setminus (W\cup N_G(W))$ and recall that $|X|\geq k$. Let $U_0$ be a random subset of $U$ chosen according to the uniform distribution, and let
$$X_0=\{x\in X: d_G(x,U_0)\mbox{ is odd}\}.$$
Since $\mathbb{E}[|X_0|]= |X|/2$, it follows that there exists an outcome $U_0\subseteq U$ for which $|X_0|\ge |X|/2\geq k/2$. Fix such $U_0$.

Next, apply Gallai's theorem to $G[X_0]$ to find a subset $X_1\subseteq X_0$ with $|X_1|\ge |X_0|/2\ge k/4$ and all degrees in $G[X_1]$ even. Finally, for every $u\in U_0$ with $d_G(u,X_1)$ even, add an edge of $M$ containing $u$. Clearly, the obtained graph $G_1$ has size at least $|X_1|\ge |X|/4\ge k/4$, and all its degrees are odd. This completes the proof.
\end{proof}

The following simple lemma will be used several times below.
\begin{lemma}
\label{lemma1n}
Let $G$ be a bipartite graph with parts $A,B$ such that $d(b)>0$ for every $b\in B$. Assume that $|A|\leq \alpha |B|$ for some $0<\alpha\le 1$. Then there is an edge $ab\in E(G)$ with $d(a)\geq \frac{d(b)}{\alpha}$.
\end{lemma}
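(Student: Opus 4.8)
The plan is to prove this by a double-counting argument comparing two weighted sums over the edge set, where each edge $ab$ carries a weight equal to the reciprocal of the degree of one of its endpoints. The key point is that weighting edges by $1/d(b)$ recovers $|B|$ exactly, while weighting them by $1/d(a)$ recovers at most $|A|$; the hypothesis $|A|\le \alpha|B|$ then forces some edge to have a degree imbalance in the desired direction. (We may assume $B\neq\emptyset$, as otherwise the hypothesis gives $A=\emptyset$ and there is nothing to prove; in particular $E(G)\neq\emptyset$.)

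First I would record the identity
$$\sum_{ab\in E(G)}\frac{1}{d(b)}=|B|.$$
This holds because, grouping the edges according to their endpoint in $B$, each vertex $b\in B$ contributes $d(b)\cdot\frac{1}{d(b)}=1$; here the hypothesis $d(b)>0$ for every $b\in B$ is precisely what makes each term well-defined and guarantees each $b$ contributes exactly $1$. Next I would establish the companion inequality
$$\sum_{ab\in E(G)}\frac{1}{d(a)}\le |A|.$$
Grouping edges by their endpoint in $A$, every $a$ with $d(a)>0$ contributes $d(a)\cdot\frac{1}{d(a)}=1$, while vertices of $A$ of degree $0$ are incident to no edges and contribute nothing. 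Thus the left-hand side equals the number of positive-degree vertices of $A$, which is at most $|A|$; note no assumption on the degrees in $A$ is needed.

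Combining these two facts with the hypothesis $|A|\le\alpha|B|$ yields
$$\sum_{ab\in E(G)}\frac{1}{d(a)}\le |A|\le\alpha|B|=\alpha\sum_{ab\in E(G)}\frac{1}{d(b)},$$
so that $\sum_{ab\in E(G)}\left(\frac{1}{d(a)}-\frac{\alpha}{d(b)}\right)\le 0$. Since this total is nonpositive and $E(G)\neq\emptyset$, at least one summand must be nonpositive, i.e.\ there is an edge $ab$ with $\frac{1}{d(a)}\le\frac{\alpha}{d(b)}$. Cross-multiplying (both degrees being positive) gives $d(b)\le\alpha\,d(a)$, equivalently $d(a)\ge\frac{d(b)}{\alpha}$, as desired. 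There is no genuine obstacle in this argument; the only step requiring a moment of thought is the choice of the reciprocal-degree weighting, engineered so that the two edge-sums evaluate to $|B|$ and at most $|A|$ respectively, after which the conclusion is immediate by averaging.
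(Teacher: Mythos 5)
Your proof is correct and uses essentially the same double-counting idea as the paper: both compare $\sum_{ab\in E(G)}\frac{1}{d(b)}=|B|$ with $\sum_{ab\in E(G)}\frac{1}{d(a)}\le |A|$ and then average to extract a single good edge. The only cosmetic difference is that you fold $\alpha$ into the edge weight and average once over all edges, whereas the paper averages in two stages (first over $b\in B$, then over the neighbors of that $b$); the conclusion is identical.
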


\begin{proof}
We have:
$$
\sum_{ab\in E(G)}\left(\frac{1}{d(b)}-\frac{1}{d(a)}\right)=\sum_{b\in B}d(b)\cdot\frac{1}{d(b)}-\sum_{a\in A, d(a)>0}d(a)\cdot\frac{1}{d(a)}\ge |B|-|A|\geq (1-\alpha)|B|\,.
$$
Hence there is $b\in B$ with
$$
\sum_{a\in N_G(b)}\left(\frac{1}{d(b)}-\frac{1}{d(a)}\right)\geq 1-\alpha\,.
$$
It follows that there is a neighbor $a$ of $b$ for which $\frac{1}{d(b)}-\frac{1}{d(a)}\geq (1-\alpha)\frac{1}{d(b)}$, implying $d(a)\geq \frac{d(b)}{\alpha}$ as desired.
\end{proof}

For a graph $G=(V,E)$ and $\beta>0$, define
$$
L=L(G;\beta)=\left\{v\in V: \exists u\in V, uv\in E(G), |N(u)\setminus N(v)|\geq \beta|N(u)\cup N(v)|\right\}\,.
$$
We say that for $v\in L$, an edge $uv$ as above {\em witnesses} $v\in L$.

Set
\begin{eqnarray*}
\beta &=& \frac{1}{20}\,,\\
\delta &=& \frac{1}{14}\,,\\
\epsilon &=&\frac{1}{10}\,.
\end{eqnarray*}

The next lemma is a key part in the proof of our main theorem. We did not really pursue the goal of optimizing the constants in its statement.

\begin{lemma}
\label{lemma2n}
Let $G=(V,E)$ be a graph on $|V|=n$ vertices with $\delta(G)>0$ and $|L(G;\beta)|\leq \delta n$. Then $f_o(G)\ge n/61$.
\end{lemma}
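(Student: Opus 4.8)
The plan is to read the hypothesis $|L(G;\beta)|\le \delta n$ as a strong local clustering condition, and then to build the odd subgraph one cluster at a time, applying Lemma~\ref{lemma1} inside each cluster. First I would dispose of two easy regimes: if $\alpha(G)\ge \epsilon n$ then Lemma~\ref{lemma2} already gives $f_o(G)\ge \epsilon n/2>n/61$, and if $\Delta(G)\ge \epsilon n$ then Lemma~\ref{lemma1} gives the same. So I may assume $\alpha(G)<\epsilon n$ and $\Delta(G)<\epsilon n$, which leaves room to lose constant factors later. Write $S=V\setminus L$, so that $|S|\ge (1-\delta)n$.

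The key structural point is a reformulation of the condition $v\notin L$. If $v\in S$ and $uv\in E$, then, using $|N(u)\cup N(v)|=d(v)+|N(u)\setminus N(v)|$,
\[ |N(u)\setminus N(v)|<\beta|N(u)\cup N(v)|=\beta\bigl(d(v)+|N(u)\setminus N(v)|\bigr), \]
hence $|N(u)\setminus N(v)|<\tfrac{\beta}{1-\beta}\,d(v)$. Applying this in both directions to an edge $uv$ with $u,v\in S$ shows that $N(u)$ and $N(v)$ have small symmetric difference relative to their degrees; in particular $d(u)$ and $d(v)$ agree up to a factor close to $1$, and each of $u,v$ is adjacent to a $(1-O(\beta))$-fraction of the other's neighbourhood. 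Thus adjacent vertices of $S$ are \emph{near-twins}, and I would package this as the claim that every connected component $C$ of $G[S]$ is a near-clique, in the sense that every vertex of $C$ has at least $(1-O(\beta))(|C|-1)$ neighbours inside $C$.

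Granting the claim, the construction is immediate. A vertex isolated in $G[S]$ has all of its neighbours in $L$, and any two such vertices are non-adjacent in $G$ (an edge between them would lie in $G[S]$); hence they form an independent set, so there are at most $\alpha(G)<\epsilon n$ of them. For every nontrivial component $C$, Lemma~\ref{lemma1} applied to $G[C]$ produces an induced subgraph of $G[C]$ with all degrees odd and of size at least $\Delta(G[C])/2\ge (1-O(\beta))(|C|-1)/2$. Since distinct components of $G[S]$ have no edges between them and each $C\subseteq S$, the union of these subgraphs is an induced subgraph of $G$ with all degrees odd; its size is a constant fraction of $\sum_{C}(|C|-1)\ge \tfrac{1}{2}\bigl((1-\delta)n-\epsilon n\bigr)$. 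With $\beta=\tfrac{1}{20}$, $\delta=\tfrac{1}{14}$, $\epsilon=\tfrac{1}{10}$ this comfortably exceeds $n/61$, consistent with the authors' remark that the constants are far from optimized.

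The main obstacle is the near-clique claim for the components of $G[S]$: I must control how far a neighbourhood can \emph{drift} along a path of $G[S]$. The natural danger is a long chain of cliques, but precisely such configurations are excluded, since an interior vertex of such a chain has a neighbour reaching a constant fraction of fresh vertices and therefore lies in $L$. I would make this quantitative by bounding the diameter of each component: two near-twins at distance two are forced to share almost all of their neighbourhoods, and the inequality $|N(u)\setminus N(v)|<\tfrac{\beta}{1-\beta}\,d(v)$ caps the number of genuinely new vertices produced at each step, which forces $\Delta(G[C])\ge (1-O(\beta))(|C|-1)$. Carefully tracking the dependence on $\beta$ at this step is the only delicate part of the argument; Lemma~\ref{lemma1n} can be invoked to streamline the degree comparisons should a cleaner quantitative bound be required.
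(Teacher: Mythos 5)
You left the central claim --- that every connected component of $G[S]$, $S=V\setminus L$, is a near-clique --- as ``the only delicate part,'' but it is not a matter of tracking constants: the claim is false. Take $\beta=\tfrac{1}{20}$, let $t$ be large, and build $G$ on $n=40t$ vertices as follows: a clique $A$ of size $t$; a set $Q$ of $t/20$ vertices arranged in vertex-disjoint $100$-vertex paths, each $q\in Q$ joined to all of $A$ and to its path neighbours; a hub $h$ joined to all of $A$ and to a set $P$ of $t$ pendant vertices, each adjacent only to $h$; and disjoint copies of $K_{50}$ on the remaining $\approx 37.9t$ vertices. Every $a\in A$ lies in $L$, witnessed by the edge $ah$: $|N(h)\setminus N(a)|=|P|=t\geq\beta|N(h)\cup N(a)|$, the union having size $\approx 2.05t$; likewise $P\subseteq L$. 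But no $q\in Q$ lies in $L$: its neighbours are its path neighbours (symmetric difference $O(1)$) and vertices $a\in A$, for which $|N(a)\setminus N(q)|=|Q|-1<t/20$ while $|N(a)\cup N(q)|\geq t+|Q|$, a ratio below $\tfrac{1}{21}<\beta$; and $h\not\sim q$. One checks similarly that $h$ and the $K_{50}$-vertices avoid $L$, so $|L|=2t\leq\delta n$, $\delta(G)\geq 1$, and moreover $\alpha(G)\approx 1.8t<\epsilon n$ and $\Delta(G)=2t<\epsilon n$, so your two easy regimes do not fire. Yet the components of $G[S]$ meeting $Q$ are $100$-vertex paths of maximum degree $2$: Lemma \ref{lemma1} extracts a single edge from each, not $(1-O(\beta))(|C|-1)/2$ vertices, so your per-component accounting is off by a factor of $|C|$ on these components.

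The heuristic you offer fails for an identifiable reason: the near-twin inequality $|N(u)\setminus N(v)|<\tfrac{\beta}{1-\beta}d(v)$ compares neighbourhoods in $G$, which may lie almost entirely inside $L$, whereas your components and degrees live in $G[S]$. In the example all of $Q$ consists of pairwise near-twins (their neighbourhoods essentially coincide with $A\subseteq L$), so no edge ever witnesses membership in $L$, while their degrees into their own $G[S]$-components are $2$; ``near-twin'' simply does not translate into ``high degree within the component'' once most edges go to $L$. This is exactly the difficulty the paper's proof is engineered around: it first discards $V_1=\{v\notin L:\, d(v,L)\geq\epsilon d(v)\}$ --- which contains $Q$ in the example --- proving $|V_1|\leq 12|L|$ via Lemma \ref{lemma1n}, and then clusters only the remaining set $V_2$ using a maximal disjoint family of sets $R(v)=(\{v\}\cup N(v))\setminus L$ anchored at $V_2$-vertices, showing the clusters $U_i$ are pairwise disjoint and non-adjacent with $|U_i|\leq(1+19\beta)|R_i|$. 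Even there, the conclusion is much weaker than yours: each $U_i$ is guaranteed only \emph{one} vertex $v_i$ of degree $\approx\tfrac{9}{10(1+19\beta)}|U_i|$, which is all Lemma \ref{lemma1} needs. To salvage your outline you would have to show that the components on which your claim fails carry only a small fraction of $S$, and that is essentially the paper's $V_1$/$V_2$ analysis --- i.e., the step you skipped is the actual content of the lemma.
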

\begin{proof}
Define
\begin{eqnarray*}
V_1 &=& \{v\in V\setminus L: d(v,L)\geq \epsilon d(v)\}\,,\\
V_2 &=&V\setminus (V_1\cup L).
\end{eqnarray*}

Suppose first that $ |V_1|\geq 12 |L|$. Observe that $d(v,L)\ge \epsilon d(v)>0$ by the assumption $\delta(G)>0$. By Lemma \ref{lemma1n} there exists $uv\in E(G)$ with $v\in V_1$ and $u\in L$ such that $d(u,V_1)\geq 12 d(v,L)\ge 12\epsilon d(v)$. Therefore we have that
$$|N(u)\setminus N(v)|\geq d(u)-d(v)\ge \frac{12\epsilon-1}{12\epsilon+1}(d(u)+d(v))>\beta |N(u)\cup N(v)|,$$
so in particular $v$ should also be in $L$ with $uv$ witnessing it --- a contradiction. We conclude that
$$
|V_1|< 12 |L|\le 12\delta n\,,
$$
and therefore $|V_2|\geq (1-\delta-12\delta)n=\frac{n}{14}$.

Let $v\not\in L$. Take an edge $uv\in E(G)$. Then
$$
\max\{1,d(u)-d(v)\}\le |N(u)\setminus N(v)|\le \beta |N(u)\cup N(v)|\le \beta (d(u)+d(v))\,,
$$
yielding:
$$
d(u)\le \frac{1+\beta}{1-\beta}d(v)\,,
$$
and
$$
\beta\left(\frac{1+\beta}{1-\beta}+1\right)d(v)\ge 1\,.
$$
This shows that every vertex $v\in V\setminus L$ has degree $d(v)\ge\lceil\frac{1-\beta}{2\beta}\rceil=10$.

Let now $uv\in E(G)$ with $u,v\not\in L$. Then
$$
|N(u)\setminus N(v)|, |N(v)\setminus N(u)|\leq \beta |N(u)\cup N(v)|\,,
$$
and hence
\begin{equation}\label{intersection close to union}
|N(u)\cap N(v)|\geq (1-2\beta)|N(u)\cup N(v)|.
\end{equation}
Since
$$|N(u)\cap N(v)|\leq \min\{d(u),d(v)\} \textrm{ and } |N(u)\cup N(v)|\geq \max\{d(u),d(v)\},$$ it follows that

\begin{equation}\label{degrees are close}
(1-2\beta)d(u)\leq d(v)\leq \frac{d(u)}{1-2\beta}<(1+3\beta)d(u).
\end{equation}

Now, for all $v\notin L$ define $R(v)=\left(\{v\}\cup N(v)\right)\setminus L$. Notice that as $d(v)\ge 10$  we have $|R(v)|\ge (1-\epsilon)d(v)+1\ge 10$ for $v\in V_2$. Suppose that $R(u)\cap R(v)\neq \emptyset$ for some $u\neq v$ where $v\in V_2$ (note that it might be that $u\in V_1$). Then for $w\in R(v)\cap R(u)$, by \eqref{intersection close to union} we have
\begin{equation} \nonumber
|N(u)\cap N(w)|\geq (1-2\beta)|N(u)\cup N(w)| \textrm{ and } |N(v)\cap N(w)|\geq (1-2\beta)|N(v)\cup N(w)|,
\end{equation}

which implies, by the identity $|A\triangle B|=|A\cup B|-|A\cap B|$, that

$$
|N(u)\triangle N(w)|\leq \frac{2\beta}{1-2\beta} |N(u)\cap N(w)|<3\beta |N(u)\cap N(w)|
$$
and
$$
|N(v)\triangle N(w)|\leq\frac{2\beta}{1-2\beta} |N(v)\cap N(w)|<3\beta |N(v)\cap N(w)|\,.
$$

Therefore, we have
\begin{align} \label{chaining}
    |N(u)\cap N(v)|&\geq |N(u)\cap N(v)\cap N(w)|  \\ \nonumber
    &\geq |N(u)\cup N(v)|-|N(u)\triangle N(w)|-|N(v)\triangle N(w)| \\ \nonumber
    &> |N(u)\cup N(v)|-6\beta \max\{d(u),d(v)\}\\\nonumber
    &\geq (1-6\beta)|N(u)\cup N(v)|.
\end{align}

Since  $v\in V_2$ we conclude that
\begin{align} \label{intersection}
     |R(u)\cap R(v)|&\geq |N(u)\cap N(v)|-\epsilon d(v)\\ \nonumber
    &> \left(1-6\beta-\epsilon\right)|N(u)\cup N(v)|\\ \nonumber
    &\geq  \left(1-6\beta-\epsilon\right)|N(u)\cup R(v)|\\ \nonumber
    &=  \left(1-8\beta\right)|N(u)\cup R(v)|\,.
\end{align}

Next, let $R_1,\ldots, R_k$ be a maximal by inclusion collection of non-intersecting sets $R(v_i), v_i\in V_2$. Due to maximality, every $v\in V_2$ has its set $R(v)$ intersecting with at least one of the $R_i$'s; moreover, the above argument shows that it can intersect only one such set. Define now
$$
U_i=\{v\notin L: R(v)\cap R_i\ne\emptyset\}\,.
$$
Trivially we have $R_i\subseteq U_i$. Also, $V_2 \subseteq \bigcup_{i=1}^k U_i $ due to the maximality of the family $R_1,\ldots,R_k$.

We wish to show that all $U_i$ are disjoint and that there are no edges in between different $U_i$'s. (This will add to the above stated fact that the family of $U_i$'s forms a cover of $V_2$.)

To prove the latter claim, suppose that there exists an edge $w_1w_2\in E(G)$ for some $w_1\in U_i,w_2\in U_j$, $1\le i\ne j\le k$. We will obtain a contradiction by showing that $R_i\cap R_j\neq \emptyset$. Since both $w_1,w_2\notin L$, by \eqref{intersection close to union} and \eqref{degrees are close} we conclude that
    $$|N(w_1)\cap N(w_2)|\geq (1-2\beta)|N(w_1)\cup N(w_2)| \textrm{ and } |N(w_1)|\in (1\pm 3\beta)|N(w_2)|\,.$$

Moreover, by \eqref{chaining} we have
$$|N(w_1)\cap N(v_i)|>(1-6\beta)|N(w_1)\cup N(v_i)|,\textrm{ and }|N(w_2)\cap N(v_j)| \geq (1-6\beta)|N(w_2)\cup N(v_j)|.$$

Since $v_i, v_j\in V_2$, the above inequalities imply that
$$|N(w_1)\cap R_i|> (1-6\beta-\epsilon)|N(w_1)\cup R_i|,\textrm{ and }|N(w_2)\cap R_j| > (1-6\beta-\epsilon)|N(w_2)\cup R_j|.$$

It follows that
$$
|N(w_1)\cap R_i|>(1-6\beta-\epsilon)|N(w_1)|
$$
and
$$
|N(w_2)\cap R_j|>(1-6\beta-\epsilon)|N(w_2)|\,,
$$
and recalling that
$$
|N(w_1)\cap N(w_2)|\ge (1-2\beta)|N(w_1)\cup N(w_2)|\,,
$$
we conclude that $R_i\cap R_j\ne\emptyset$ --- a contradiction. In a similar way we can show that $U_i\cap U_j=\emptyset$.

Next, suppose that $|U_i|\ge (1+19\beta)|R_i|$. Then by looking at the auxiliary bipartite graph between $R_i$ and $U_i$ ($v\in R_i$, $u\in U_i$ are connected by an edge if $uv\in E(G)$) and by applying Lemma \ref{lemma1n} to this graph we derive that there are $v\in R_i$, $u\in U_i$ with $d(v)\ge (1+19\beta) d(u,R_i)$. Since $uv\in E(G)$ and both $u,v\not\in L$, it follows that
$$
d(v)<(1+3\beta) d(u)\,.
$$
Moreover, since $u\in U_i$ we have:
$$
d(u,R_i)\ge (1-8\beta)d(u)\,.
$$
All in all, since $d(v)\ge (1+19\beta) d(u,R_i)$ we conclude that
$$
(1+3\beta)d(u)> d(v) \ge (1+19\beta)d(v,R_i)>(1+19\beta)(1-8\beta)d(u)>(1+3\beta)d(u)\,,
$$
a contradiction.

Therefore, we can assume that $|U_i|\leq (1+19\beta)|R_i|$ for all $1\leq i\leq k$. Looking at the induced subgraph $G[U_i]$, we note that it has vertex $v_i$ of degree $|R_i|-1\ge \frac{9|R_i|}{10}\ge \frac{9}{10(1+19\beta)}|U_i|$.
By applying Lemma \ref{lemma1} to $G[U_i]$ we find an induced odd subgraph $O_i$ of $G[U_i]$ of size at least $\frac{9}{20(1+19\beta)}|U_i|=\frac{9|U_i|}{39}$.

Finally, since all $U_i$'s are disjoint, there are no edges between any two such $U_i$'s and since $V_2\subseteq \bigcup U_i$, we conclude that $O=\bigcup_{i=1}^k O_i$ is an induced odd subgraph of size at least $\frac{9|V_2|}{39}> n/61$. This completes the proof. \end{proof}

\section{Proof of Theorem \ref{main}}
The main plan is as follows. We will grow edge by edge a matching $M$ with sides $U,W$ so that every $w\in W$ has exactly one neighbor between the vertices covered by $M$ (which is of course its mate $u$ in the matching). Moreover, the set $U$ has ``many'' neighbors outside of $M$ not connected to $W$. If the set of such neighbors is substantially large, then we will be able to apply Lemma \ref{lemma3} to get a large induced subgraph with all degrees odd. Otherwise we will show that either there exists a large subset of vertices $V'$ such that $\delta(G[V'])\geq 1$ with small $L(G[V'];1/20)$ (and then we are done by Lemma \ref{lemma2n}), or that we can extend the matching while enlarging substantially the set of neighbors outside $M$ not connected to $W$. The details are given below.

We start with $M_0=\emptyset$, and given $M_i$, $i\geq 0$, we define
\begin{eqnarray*}
X_i &=& N(U_i)\setminus (W_i\cup N(W_i))\,,\\
V_i &=& V\setminus N(U_i\cup W_i)\,.
\end{eqnarray*}

In particular, we initially have $X_0=\emptyset$ and $V_0=V$. We will run our process until the first time we have $|V_i|<n/2$ (in particular, we may assume throughout the process that $|V_i|\geq n/2$). Now, fix $\beta=1/20$ and $\delta=1/14$ (same parameters as set before Lemma \ref{lemma2n}). Our goal is to show that $f_o(G)\geq \frac{n}{T}$, where $T=10000$. We will maintain $|X_i|\geq \frac{|V\setminus V_i|}{40}$. If at some point we reach $|X_i|\geq \frac{4 n}{T}$ then we are done by Lemma \ref{lemma3}. Hence we assume $|X_i|\leq \frac{4n}{T}=\frac{n}{2500}$. Moreover, if $G[V_i]$ has at least $2n/T$ isolated vertices, then since this set induces an independent set in $G$, by Lemma \ref{lemma2} we are done as well. Therefore, letting $V'_i\subseteq V_i$ be the set of all non-isolated vertices in $G[V_i]$, since $|V_i|\geq n/2$ we obtain that $|V'_i|\geq (1-4/T)|V_i|\geq |V_i|/2$. We can further assume $|L(G[V'_i];\beta)|\geq \delta |V'_i|\geq \delta n/4$, as otherwise by Lemma \ref{lemma2n} we obtain an odd subgraph of size at least $|V'_i|/61\geq n/244$. Our goal now is to show that under these assumptions we can add an edge to $M_i$ while maintaining $|X_{i+1}|\geq \frac{|V\setminus V_{i+1}|}{40}$.

Consider first the case where every $v\in L:=L(G[V_i'];\beta)$ satisfies $d(v,X_i)\geq d(v,V_i)/40$. By Lemma \ref{lemma1n} applied to the bipartite graph between $X_i$ and $L$, using the fact that
$$|X_i|\leq \frac{4n}{T}= \frac{n}{2500}\leq \frac{|L|}{44}\,,$$
we derive that there is an edge $xv$ with $x\in X_i$ and $v\in L$ and $d(x,L)\geq 44d(v,X_i)\geq 1.1d(v,V_i)>0$. Then we can define $M_{i+1}$ by adding $xv$ to $M_{i}$ and setting $U_{i+1}:=U_i\cup \{x\}$ and $W_{i+1}:=W_i\cup \{v\}$. By doing so we obtain that
\begin{align*}
    |X_{i+1}|&=|N(U_{i+1})\setminus (W_{i+1}\cup N(W_{i+1})|\\
    &\geq |N(U_{i})\setminus (W_i\cup N(W_{i}))|+|N(x,V_i)|-|N(v,X_i)|-|N(v,V_i)|\\
    &= |X_i|+d(x,V_i)-d(v,X_i)-d(v,V_i)\\
    &\geq |X_i|+d(x,V_i)\left(1-\frac{1}{44}-\frac{10}{11}\right)\\
    &> |X_i|+\frac{3d(x,V_i)}{44}.
\end{align*}
Moreover, since we clearly have that
$$|V_{i+1}|\geq |V_i|-d(x,V_i)-d(v,V_i)\geq |V_i|-\frac{21d(x,V_i)}{11},$$
it follows that at least$\frac{3/44}{21/11}>\frac{1}{40}$ proportion of the vertices deleted from $V_i$ go to $X_{i+1}$.

In the complementary case there exists a vertex $v\in L$ with $d(v,X_i)\leq d(v,V_i)/40$. Let $uv$ be an edge in $G[V_i']$ witnessing $v\in L$ (that is, $|N(u,V_i)\setminus N(v,V_i)|\geq \beta |N(u,V_i)\cup N(v,V_i)|$). Then we can define $M_{i+1}$ by adding $uv$ to $M_i$, and set $U_{i+1}:=U_i\cup \{u\}$ and $W_{i+1}:=W_i\cup \{v\}$. In this case we have:
\begin{align*}
    |X_{i+1}|&=|N(U_{i+1})\setminus (W_{i+1}\cup N(W_{i+1}))|\\
    &\geq |N(U_{i})\setminus (W_i\cup N(W_{i}))|+|N(u,V_i)\setminus N(v,V_i)|-|N(v,X_i)|\\
    &= |X_i|+|N(u,V_i)\setminus N(v,V_i)|-|N(v,X_i)|\\
    &\geq |X_i|+\beta |N(u,V_i)\cup N(v,V_i)|-|N(v,X_i)|\\
    &\geq |X_i|+(\beta-\frac{1}{40}) |N(u,V_i)\cup N(v,V_i)|.
\end{align*}

Moreover, since we have $|V_{i+1}|\geq |V_i|-|N(u,V_i)\cup N(v,V_i)|$, at least $\beta - \frac{1}{40}=\frac{1}{40}$ proportion of the vertices deleted from $V_i$ go to $X_{i+1}$.

All in all, in each step, either we find an odd subgraph of size at least $\frac{n}{T}$ (in case that we have ``many'' isolated vertices, or that $|X_i|\geq \frac{4n}{T}$, or that $L(G[V'_i];\beta)$ is ``large''), or we can keep $X_i$ of size at least $\frac{|V\setminus V_i|}{40}$. In particular, if the latter case holds until $|V_i|<n/2$, we obtain that $|X_i|\geq \frac{n}{80}$ and we are done by Lemma \ref{lemma3}. This completes the proof.\hfill$\Box$

\bigskip

{\bf Acknowledgement.}
We would like to thank Alex Scott for his remarks,  and for pointing out a serious flaw in the previous version.


\begin{thebibliography}{9}
\bibitem{Car94}
Y. Caro, On induced subgraphs with odd degrees,
Discrete Mathematics 132 (1994), 23--28.
\bibitem{Lovasz}  L. Lov\'asz, Combinatorial Problems and Exercises, 2nd edition,
   AMS Chelsea Publishing, 1993.

\bibitem{Sco92} A. D. Scott, Large induced subgraphs with all degrees odd,
Combinatorics, Probability and Computing 1 (1992), 335--349.
\bibitem{Sco01}
A. D. Scott, On induced subgraphs will all degrees odd,
Graphs and Combinatorics 17 (2001), 539--553.
\end{thebibliography}
\end{document}